\newtheorem{thm}{Theorem}[section] 
\newtheorem{defn}[thm]{Definition}
\newtheorem{obs}[thm]{Observation}
\newtheorem{cor}[thm]{Corollary}
\newtheorem{lem}[thm]{Lemma}
\newtheorem{prop}[thm]{Proposition}
\newtheorem{rem}[thm]{Remark}
\def\cL{{\mathcal L}}
\def\C{{\mathbb C}}
\def\R{{\mathbb R}}
\def\P{{\mathbb P}}
\def\PP{{\mathbb P}}
\def\({\left(}
\def\){\right)}
\long\def\forget#1\forgotten{}
\newif \iffurther 
\newif \iffurther 
\newif\ifXY 
\begin{document}

\title[Conjugation-free geometric groups of arrangements II]{A conjugation-free geometric presentation of fundamental groups of arrangements II: Expansion and some properties}

\author[Eliyahu, Garber, Teicher]{Meital Eliyahu$^1$, David Garber and Mina Teicher}

\stepcounter{footnote}
\footnotetext{Partially supported by the Israeli Ministry of Science and Technology.}

\address{Meital Eliyahu, Department of Mathematics, Bar-Ilan University, 52900 Ramat-Gan, Israel}
\email{eliyahm@macs.biu.ac.il}

\address{David Garber, Department of Applied Mathematics, Faculty of
  Sciences, Holon Institute of Technology, 52 Golomb st., PO
  Box 305, 58102 Holon, Israel}
\email{garber@hit.ac.il}

\address{Mina Teicher, Department of Mathematics, Bar-Ilan University, 52900 Ramat-Gan, Israel}
\email{teicher@macs.biu.ac.il}

\begin{abstract}
A {\em conjugation-free geometric presentation} of a fundamental group
is a presentation with the natural topological generators $x_1, \dots, x_n$
and the cyclic relations:
$$x_{i_k}x_{i_{k-1}} \cdots x_{i_1} = x_{i_{k-1}} \cdots x_{i_1} x_{i_k} = \cdots = x_{i_1} x_{i_k} \cdots x_{i_2}$$
with no conjugations on the generators.

We have already proved in \cite{EGT} that if the graph of the arrangement is a disjoint union of cycles, then its fundamental group has a conjugation-free geometric presentation.
In this paper, we extend this property to arrangements whose graphs are a disjoint union of cycle-tree graphs.

Moreover, we study some properties of this type of presentations for a fundamental group of a line arrangement's complement. We show that these presentations satisfy a completeness property in the sense of Dehornoy, if the corresponding graph  of the arrangement has no edges. The completeness property is a powerful property which leads to many nice properties concerning the presentation (such as the left-cancellativity of the associated monoid and yields some simple criterion for the solvability of the word problem in the group).
\end{abstract}

\keywords{conjugation-free presentation, fundamental group, complete presentation, complemented presentation\\
\indent  {\it MSC2010:} Primary: 14H30; Secondary: 32S22, 57M05, 20M05, 20F05.}

\maketitle

\section{Introduction}
The fundamental group of the complement of plane curves is a very important topological invariant, which can be also computed for line arrangements. We present here some applications of this invariant.

Chisini \cite{chisini}, Kulikov \cite{Kul,Kul2} and Kulikov-Teicher \cite{KuTe} have used the fundamental group of complements of branch curves of
generic projections in order to distinguish between connected components of the moduli space of smooth projective surfaces, see also \cite{FrTe}.

Moreover, the Zariski-Lefschetz hyperplane section theorem (see \cite{milnor})
states that:
$$\pi_1 (\C\P^N \setminus S) \cong \pi_1 (H \setminus (H \cap S)),$$
where $S$ is a hypersurface and $H$ is a generic 2-plane.
Since $H \cap S$ is a plane curve, the fundamental groups of complements of curves
can be used also for computing the fundamental groups of complements of hypersurfaces in $\C\P^N$.

A different need for computations of fundamental groups is to
obtain more examples of Zariski pairs \cite{Z1,Z2}. A pair of plane
curves is called {\it a Zariski pair} if they have the
same combinatorics (to be exact: there is a degree-preserving bijection between the set of irreducible
components of the two curves $C_1,C_2$, and there exist regular neighbourhoods of the curves $T(C_1),T(C_2)$
such that the pairs $(T(C_1),C_1),(T(C_2),C_2)$ are homeomorphic and the homeomorphism respects the bijection
above \cite{AB-CR}), but their complements in $\P^2$ are not homeomorphic.
For a survey, see \cite{ABCT}.

It is also interesting to explore new finite non-abelian groups which
serve as fundamental groups of complements of plane curves in general;
see for example \cite{AB,AB1,Deg,Z1}.

\medskip

An {\it affine line arrangement} in $\C^2$ is a union of copies of $\C^1$ in $\C^2$. Such an arrangement is called {\em real} if the defining equations of all its lines can be written with real coefficients, and {\em complex} otherwise. Note that the intersection of a real arrangement with the natural copy of $\R^2$ in $\C^2$ is an arrangement of lines in the real plane, called the {\it real part} of the arrangement.

Similarly, a {\it projective line arrangement} in $\C\PP^2$ is a union of copies of $\C\PP^1$ in $\C\PP^2$. Note that the realization of the MacLane configuration \cite{maclane} is an example of a complex arrangement; see also \cite{ABC,Ry}.

For real and complex line arrangements $\mathcal L$, Fan \cite{Fa2} defined a graph $G(\mathcal L)$ which is associated to its multiple points (i.e., points where more than two lines are intersected). We give here its version for real arrangements: Given a real line arrangement $\mathcal L$, the graph $G(\mathcal L)$ of multiple points lies on the real part of $\mathcal L$. It consists of the multiple points of $\mathcal L$, with the
segments between the multiple points on lines which have at least two multiple points. Note that if the arrangement consists of three
multiple points on the same line, then $G(\mathcal L)$ has three vertices on the same line (see Figure \ref{graph_GL}(a)).
If two such lines happen to intersect in a simple point (i.e., a point where exactly two lines are intersected), it is ignored
(i.e., the lines do not meet in the graph). See another example in Figure \ref{graph_GL}(b) (note that Fan's definition gives a graph different from the graph defined in \cite{JY,WY}).

\begin{figure}[!ht]
\epsfysize 4cm
\centerline{\epsfbox{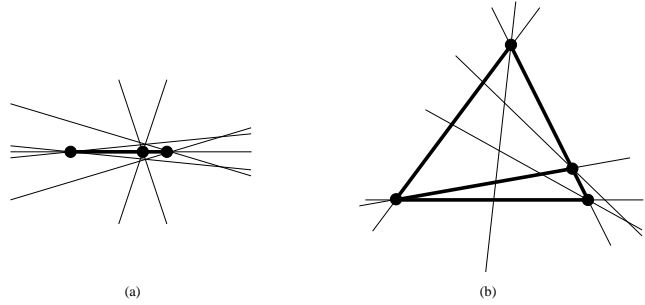}}
\caption{Examples for $G(\mathcal L)$}\label{graph_GL}
\end{figure}

\medskip

In \cite{EGT} we introduce the notion of a {\it conjugation-free geometric presentation} of the fundamental group of an arrangement:
\begin{defn}
Let $G$ be a fundamental group of the affine or projective complements of some line arrangement with $n$ lines. We say that $G$ has {\em a conjugation-free geometric presentation} if $G$ has a presentation with the following properties:
\begin{itemize}
\item In the affine case, the generators $\{ x_1,\dots, x_n \}$ are the meridians of lines at some far side of the arrangement, and therefore the number of generators is equal to $n$.
\item In the projective case, the generators are the meridians of lines at some far side of the arrangement except for one, and therefore the number of generators is equal to  $n - 1$.
\item In both cases, the relations are of the following type:
$$x_{i_k} x_{i_{k-1}} \cdots x_{i_1} = x_{i_{k-1}} \cdots x_{i_1} x_{i_k} = \cdots = x_{i_1} x_{i_k} \cdots x_{i_2},$$
where $\{ i_1,i_2, \dots , i_k \} \subseteq \{1, \dots, m \}$ is an increasing subsequence of indices,
where $m=n$ in the affine case and $m=n-1$ in the projective case. Note that for $k=2$, we get the usual commutator.
\end{itemize}

\end{defn}

Note that in usual geometric presentations of the fundamental group, most of the relations have conjugations.

\medskip

The importance of this family of arrangements is that the fundamental group can be read directly from the arrangement or equivalently from its incidence lattice (where the {\em incidence lattice} of an arrangement is the partially-ordered set of non-empty intersections of the lines, ordered by
inclusion, see \cite{OT}) without any computation. Hence, for this family of arrangements, the incidence lattice determines the fundamental group of the complement (this is based on Cordovil \cite{cor}, too).

\medskip

We start with the easy fact that there exist arrangements whose fundamental groups have no conjugation-free geometric presentation: The fundamental group of the Ceva arrangement (also known as the {\it braid arrangement}, appears in Figure \ref{ceva}) has no conjugation-free geometric presentation (see \cite{EGT}).

\begin{figure}[!ht]
\epsfysize 4cm
\centerline{\epsfbox{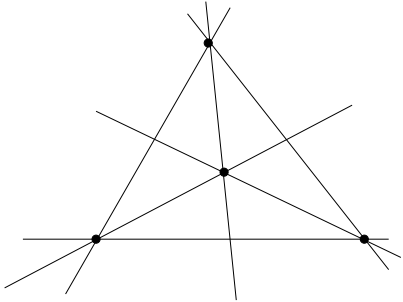}}
\caption{Ceva arrangement}\label{ceva}
\end{figure}

Note also that if the fundamental groups of two arrangements $\cL_1,\cL_2$  have conjugation-free geometric presentations and the arrangements intersect transversally, then the fundamental group of $\cL_1 \cup \cL_2$ has a conjugation-free geometric presentation, too. This is due to the important result of Oka and Sakamoto \cite{OkSa}:
\begin{thm}\label{OS_thm} {\bf (Oka-Sakamoto)} Let $C_1$ and $C_2$ be algebraic plane curves in $\C ^2$.
Assume that the intersection $C_1 \cap C_2$
consists of distinct $d_1 \cdot  d_2$ points, where $d_i \ (i=1,2)$ are the
respective degrees of $C_1$ and $C_2$. Then:
$$\pi _1 (\C ^2 - (C_1 \cup C_2)) \cong \pi _1 (\C ^2 -C_1) \oplus \pi _1 (\C ^2 -C_2)$$
\end{thm}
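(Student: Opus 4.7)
The plan is to apply the Zariski--van Kampen theorem to $C=C_1\cup C_2$ via a generic linear pencil. First I would choose affine coordinates $(x,y)$ on $\C^2$ so that the projection $\pi\colon (x,y)\mapsto x$ is generic for $C$: no two critical values of $\pi|_C$ coincide, the line at infinity is not tangent to either curve, and a fixed base fibre $F_{c_0}=\pi^{-1}(c_0)$ meets $C_1\cup C_2$ in exactly $d_1+d_2$ distinct simple points. On $F_{c_0}$, select meridians $\alpha_1,\dots,\alpha_{d_1}$ around the $C_1$-punctures and $\beta_1,\dots,\beta_{d_2}$ around the $C_2$-punctures; together they freely generate $\pi_1(F_{c_0}\setminus C)$, and by Zariski--van Kampen, $\pi_1(\C^2\setminus C)$ is the quotient of this free group by one family of relations per critical value of $\pi|_C$.

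The next step is to classify the critical values into three types: (i) those coming from $C_1$ alone (self-singularities or vertical tangencies), (ii) those coming analogously from $C_2$, and (iii) those coming from the $d_1 d_2$ points of $C_1\cap C_2$. For type (i) the braid monodromy acts only on the strands corresponding to $C_1$ and fixes every strand of $C_2$, since locally $C_2$ plays no role; the resulting relations therefore involve only the $\alpha_i$, and my next task would be to observe that they coincide, letter by letter, with the Zariski--van Kampen relations for $\pi_1(\C^2\setminus C_1)$ on the generators $\alpha_1,\dots,\alpha_{d_1}$. Type (ii) is entirely symmetric and presents $\pi_1(\C^2\setminus C_2)$ on the $\beta_j$.

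The heart of the argument, and the main obstacle, is type (iii). The hypothesis $|C_1\cap C_2|=d_1 d_2$ (the Bezout maximum) forces every intersection point to be a simple transverse node, so near such a node the two local sheets of $C$ (one from each $C_i$) braid around one another by a single positive half-twist as $x$ encircles the critical value; the van Kampen relation produced by a node of this kind is one commutator between the two corresponding meridians. The delicate point I would need to check is the labelling: tracking which basis element $\alpha_i$ and which $\beta_j$ appear at each of the $d_1 d_2$ nodes, and verifying that after absorbing the automorphisms produced by the relations of types (i) and (ii) the relations genuinely become $[\alpha_i,\beta_j]=1$, rather than commutators between conjugates not already identified inside the two subgroups.

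Assuming that bookkeeping can be completed, the three families of relations contain no mixed term beyond the commutators $[\alpha_i,\beta_j]$. Consequently the resulting group is the direct product of the subgroup $\langle\alpha_1,\dots,\alpha_{d_1}\rangle\cong\pi_1(\C^2\setminus C_1)$ and the subgroup $\langle\beta_1,\dots,\beta_{d_2}\rangle\cong\pi_1(\C^2\setminus C_2)$, which is the decomposition claimed in the statement.
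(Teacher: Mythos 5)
The paper does not prove this statement: it is quoted verbatim as a known result of Oka and Sakamoto \cite{OkSa}, so there is no internal proof to compare against. Judged on its own terms, your outline follows the same general strategy as the original paper of Oka--Sakamoto (a generic pencil, Zariski--van Kampen, separation of the critical values into those of $C_1$, those of $C_2$, and the nodes of $C_1\cap C_2$), and your identification of the delicate point is accurate. The problem is that you then write ``assuming that bookkeeping can be completed'' and move on --- but that bookkeeping \emph{is} the theorem. A node of $C_1\cap C_2$ does not hand you a relation $[\alpha_i,\beta_j]=1$; it hands you a relation $[w\alpha_i w^{-1}, w'\beta_j w'^{-1}]=1$ where $w,w'$ are words mixing $\alpha$'s and $\beta$'s determined by the braid monodromy, and there is no a priori reason the $d_1d_2$ such relations imply that the subgroups generated by the $\alpha$'s and by the $\beta$'s commute elementwise. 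This is exactly where the hypothesis $|C_1\cap C_2|=d_1d_2$ is used in an essential way: it guarantees that \emph{every} pair of strands (one from each curve) collides at some node, supplying enough commutators for an inductive untangling argument (ordering the critical values and peeling off conjugators one node at a time). If the hypothesis fails --- say two conics meeting tangentially in fewer than four points --- the conclusion itself is false, so no argument that does not exploit the count $d_1d_2$ at this step can be complete.

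A secondary, smaller issue: your claim that the type (i) relations ``coincide, letter by letter'' with the van Kampen relations of $C_1$ alone is not true before the commutators are established, because the conjugating paths in the fiber of $C_1\cup C_2$ may weave around the $C_2$-punctures and thus introduce $\beta$-letters into the conjugators. This is repairable \emph{after} all the relations $[\alpha_i,\beta_j]=1$ are in hand (the $\beta$-letters in any conjugator then cancel), together with the surjections onto $\pi_1(\C^2-C_1)$ and $\pi_1(\C^2-C_2)$ induced by inclusion to rule out extra relations; but it means the logical order of your argument must be: first prove the clean commutators, then clean up types (i) and (ii), then conclude the direct product. As written, the proposal is a correct plan with the central lemma left unproved.
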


The main result of \cite{EGT} is:
\begin{prop}
The fundamental groups of following family of arrangements have a conjugation-free geometric presentation: a real arrangement $\mathcal L$, where $G(\mathcal L)$ is a disjoint union of cycles of any length, and the multiplicities of the multiple points are arbitrary.
\end{prop}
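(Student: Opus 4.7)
The plan is to apply van Kampen's theorem through the Moishezon--Teicher braid monodromy algorithm and to exploit the cyclic structure of $G(\cL)$ in order to show that the conjugating words appearing in the resulting geometric presentation act trivially on the cyclic relations they conjugate, hence can be deleted.

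First I would treat the case of a single cycle of length $k$. The arrangement consists of $k$ ``cycle lines'', each containing exactly two of the $k$ multiple points of the cycle, together with additional lines that raise the multiplicity at individual multiple points without producing new edges of $G(\cL)$, and possibly extra lines meeting the configuration only in simple points. Choose a generic projection $\pi\colon \C^2\to\C$ so that no two multiple points lie on the same fiber, together with a reference fiber on the far side of $\cL$; this yields meridian generators $x_1,\dots,x_n$. Van Kampen's theorem then produces, for each multiple point $P$ of multiplicity $m$, a cyclic relation among the $m$ meridians of the lines through $P$, conjugated by a word $\omega_P$ encoding the braid monodromy along a chosen path from the reference fiber to $P$.

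The core step is to show that $\omega_P$ commutes with the cyclic relation at $P$, so that it can be removed. I would process the multiple points in the order induced by the cyclic traversal of $G(\cL)$, and verify that the braid monodromy accumulated while passing each intermediate multiple point affects only meridians that either do not appear in the upcoming cyclic relation or appear in a way controlled by the commutator relations already extracted at simple intersection points (using the standard Zariski fact that meridians of two lines meeting in a simple point commute). The cyclic closure of the graph is precisely what guarantees that these cancellations are consistent once we come back to the starting multiple point of the cycle.

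For a general disjoint union of cycles, I would handle each connected component as above. If two components correspond to subarrangements meeting only transversally, Theorem \ref{OS_thm} of Oka--Sakamoto gives the presentation of the total fundamental group as the direct sum, trivially preserving the conjugation-free form. If two components share lines but not multiple points, such a shared line contributes only simple intersections to one component from the perspective of the other, and the simple-intersection commuting relations again allow the conjugators to be pushed through and the cycles to be treated independently.

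The main obstacle I anticipate is the verification that $\omega_P$ indeed commutes with the cyclic relation: one must track the braid monodromy paths carefully, and when $\omega_P$ momentarily contains a generator that does not obviously commute with the meridians entering the relation at $P$, the cycle structure has to be used to rewrite it by means of relations already derived at earlier multiple points. The inductive order in which the multiple points are processed along the cycle is therefore the delicate part of the argument.
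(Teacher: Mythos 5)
First, a point of reference: the present paper does not prove this proposition at all. It is quoted in the introduction as the main result of \cite{EGT}, and the rest of the paper only builds on it (Proposition \ref{prop_line_add} and its corollary extend it from cycles to cycle-tree graphs). So your attempt has to be judged against the strategy of \cite{EGT} and on its own internal logic, not against a proof contained in this text. On the level of strategy you have identified the right framework: the Moishezon--Teicher braid monodromy computation, the van Kampen presentation with meridian generators taken on a far fiber, and the elimination of the conjugating words $\omega_P$ by means of the commutation relations supplied by the simple intersection points.

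Nevertheless there is a genuine gap, and it sits exactly where you place the ``core step.'' You never actually verify that $\omega_P$ can be pushed through the cyclic relation at $P$; you only describe the bookkeeping such a verification would require, and then assert that ``the cyclic closure of the graph is precisely what guarantees that these cancellations are consistent.'' That assertion is the entire content of the proposition. For a general graph the conjugations cannot be removed --- the Ceva arrangement, whose graph is $K_4$, is the standard obstruction cited in this very paper --- so any proof must isolate the specific combinatorial feature of a disjoint union of cycles (every vertex of degree at most $2$) that makes the accumulated monodromy act on the relation at $P$ only through generators already known to commute, and must then close the induction when the traversal returns to the starting vertex. In \cite{EGT} this occupies the bulk of the paper, via an explicit analysis of the Lefschetz pairs and skeletons along the cycle. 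Two smaller inaccuracies compound the problem: Fan's graph permits a line to carry three or more collinear multiple points (and the cycle may have vertices in the interiors of its edges), so your normal form in which each cycle line contains exactly two multiple points needs justification; and in the disjoint-union step, a line through two multiple points forces those points into one component, so the only possible ``sharing'' is a line meeting the other component in simple points --- a situation that must be absorbed into the main induction rather than dismissed, since it changes the conjugating words $\omega_P$ of that component.
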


\medskip

In this paper, we continue the investigation of the family of arrangements whose fundamental groups have conjugation-free geometric presentations in two directions.
First, we extend this property to real arrangements whose graphs are a disjoint union of cycle-tree graphs, where an example for a cycle-tree graph is presented in Figure \ref{cycle-tree-intro}
(see Definition \ref{def-cycle-tree} below).

\begin{figure}[!ht]
\epsfysize 3cm
\centerline{\epsfbox{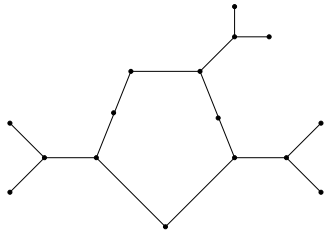}}
\caption{An example of a cycle-tree graph}\label{cycle-tree-intro}
\end{figure}

In the second direction, we study some properties of this type of presentations for a fundamental group of a line arrangement's complement. We prove:
\begin{prop}\label{prop-complete}
Let $\cL$ be a real arrangement whose fundamental group has a conjugation-free geometric presentation and its graph $G(\cL)$ has no edges. Then the presentation of the corresponding monoid is complete (and complemented).
\end{prop}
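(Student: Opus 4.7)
The plan is to regard the conjugation-free geometric presentation of $\pi_1$ as a monoid presentation of $M=\langle x_1,\dots,x_n \mid R\rangle$, and then verify the two conditions from Dehornoy's theory: complementedness (at most one relation in $R$ of the form $x_pu=x_qv$ for each ordered pair of distinct generators), and the cube condition for every triple of distinct generators whose pairwise complements are defined. The relations in $R$ are of two kinds: commutators $x_ix_j=x_jx_i$ from simple intersections, and, for each multiple point $P$ with lines indexed by $i_1<\cdots<i_k$, the $\binom{k}{2}$ pairwise equalities among the $k$ cyclic rotations of $x_{i_k}x_{i_{k-1}}\cdots x_{i_1}$.

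For complementedness, the hypothesis that $G(\cL)$ has no edges means that no line carries more than one multiple point, so any two distinct lines $\ell_p,\ell_q$ meet in at most one point, either simple or a multiple point $P$. The simple case yields only the commutator relation. In the multiple-point case, each of the $k$ cyclic rotations at $P$ begins with a distinct generator, so exactly one rotation starts with $x_p$ and exactly one with $x_q$; their equality is the unique relation of the form $x_pu=x_qv$ at $P$. Hence the complement function $f$ is well-defined.

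For the cube condition on a triple $(x_p,x_q,x_r)$, I would split into cases according to how the three lines interact. If no two share a multiple point, every defined complement among the three pairs is of commutation type, and the cube identity is immediate. If exactly two, say $x_p,x_q$, meet at a multiple point $P$, then under the ``no edges'' hypothesis $\ell_r$ cannot meet $\ell_p$ or $\ell_q$ at any other multiple point (otherwise $\ell_p$ or $\ell_q$ would carry two multiple points), so $x_r$ commutes with both $x_p$ and $x_q$; a short computation using the commutations then reduces the cube identity to the single relation $f(x_p,x_q)$ at $P$. The essential remaining case is when $x_p,x_q,x_r$ all pass through a single multiple point $P$, where the verification becomes purely local at $P$. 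Here I would write out each complement $f(x_{i_a},x_{i_b})$ explicitly as a subword of the corresponding rotation, perform the reversing process on both sides of Dehornoy's cube identity, and verify that the two resulting positive words coincide. The cyclic symmetry of the rotation scheme should reduce the computation to one or two generic configurations.

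The main obstacle is precisely this local combinatorial verification at each multiplicity $k\geq 3$; the ``no edges'' hypothesis is used essentially, because it prevents a triple of generators from straddling two distinct multiple points and thereby guarantees that the verification at each multiple point is independent of all the others, reducing everything to a single-multiple-point local analysis.
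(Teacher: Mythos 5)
Your overall strategy --- homogeneity plus complementedness, then the cube condition checked on triples of generators according to how the three lines meet --- is exactly the paper's. Your case division is in fact slightly finer than what the paper writes out: the paper treats only the case of three pairwise simple intersections and the case of three lines through one common multiple point, whereas you also isolate the mixed case (two of the lines through a multiple point $P$, the third meeting the others simply), which under the ``no edges'' hypothesis genuinely occurs and does require the observation you make, namely that $\ell_r$ must then meet \emph{every} line through $P$ in a simple point, so $x_r$ commutes with every letter of $x_p\backslash x_q$.

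The gap is that you never execute the one computation on which everything rests: the cube identity for three generators through a common multiple point. You describe it as a ``local combinatorial verification'' to be carried out by reversing both sides, possibly over ``one or two generic configurations,'' but you do not carry it out. The paper's resolution is a one-line observation that makes the whole case collapse: for a multiple point $P$ with relations equating the cyclic rotations of $x_{i_k}x_{i_{k-1}}\cdots x_{i_1}$, the complement $x_a\backslash x_b$ is, for every $b\neq a$ with $\ell_b$ through $P$, the \emph{same} word, namely the tail $w_a$ of the unique rotation beginning with $x_a$ (the relation $x_a\ldots=x_b\ldots$ pairs the rotation starting with $x_a$ against the one starting with $x_b$, and its $x_a$-side does not depend on $b$). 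Hence $(x_a\backslash x_b)\backslash(x_a\backslash x_c)=w_a\backslash w_a=\varepsilon$ for every permutation, both sides of the cube identity are empty, and no actual reversing computation is needed. Without this observation, or an equivalent explicit verification for arbitrary multiplicity $k\geq 3$ (not just for one or two small configurations), your proposal remains a plan for the decisive case rather than a proof of it.
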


The completeness property is a powerful property which leads to many nice properties concerning the presentation (such as the left-\break cancellativity of the associated monoid and yields some simple criterion for the solvability of the word problem in the group and for Garside groups).

\medskip

The paper is organized as follows. In Section \ref{line_addition}, we prove that arrangements whose graphs are a disjoint union of cycle-tree graphs have a conjugation-free geometric presentation of the fundamental group of the complement. In Section \ref{sec-complemented}, we prove that conjugation-free geometric presentations are complemented presentations. Section \ref{sec-complete} deals with complete presentations, and includes the proof of Proposition \ref{prop-complete}.

\section{Adding a line through a single point preserves the conjugation-free geometric presentation}\label{line_addition}

We start with the following obvious observation, which is based on the Oka-Sakamoto decomposition theorem (see Theorem \ref{OS_thm} above):
\begin{obs}\label{obs_line_add}
Let $\cL$ be an arrangement whose fundamental group has a conjugation-free geometric presentation.
Let $L$ be a line which intersects $\cL$ transversally. Then
$\cL \cup L$ is also an arrangement whose fundamental group has a conjugation-free geometric presentation.
\end{obs}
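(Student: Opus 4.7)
The plan is to apply the Oka--Sakamoto decomposition theorem (Theorem~\ref{OS_thm}) directly. Viewing $\cL$ as a plane curve $C_1$ of degree $n$ (where $n$ is the number of lines in $\cL$) and the extra line $L$ as a curve $C_2$ of degree $1$, the assumption of transversal intersection means that $\cL \cap L$ consists of exactly $n$ distinct simple points, which is precisely the $d_1 \cdot d_2 = n \cdot 1$ required by Theorem~\ref{OS_thm}. Hence
\[
\pi_1\bigl(\C^2 \setminus (\cL \cup L)\bigr) \;\cong\; \pi_1(\C^2 \setminus \cL) \;\oplus\; \pi_1(\C^2 \setminus L).
\]

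Next I would combine presentations. By hypothesis, $\pi_1(\C^2 \setminus \cL)$ has a conjugation-free geometric presentation with generators $x_1, \dots, x_n$ (meridians of the lines of $\cL$ at some far side) and relations of the cyclic form
\[
x_{i_k} x_{i_{k-1}} \cdots x_{i_1} = x_{i_{k-1}} \cdots x_{i_1} x_{i_k} = \cdots = x_{i_1} x_{i_k} \cdots x_{i_2}.
\]
Meanwhile $\pi_1(\C^2 \setminus L) \cong \Z$ is generated by a single meridian $y$ of $L$. The direct sum decomposition then yields the presentation of $\pi_1(\C^2 \setminus (\cL \cup L))$ on generators $x_1, \dots, x_n, y$ with the old relations together with the commutator relations $[y, x_i] = 1$ for $i = 1, \dots, n$. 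Each of these commutator relations is exactly the $k = 2$ instance of the conjugation-free relation format, so the new set of relations is still entirely conjugation-free.

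Finally I would verify the geometric meaning of the generators. Since $L$ intersects $\cL$ only in simple points, one may choose a generic reference point ``far'' from $\cL \cup L$ (on the far side of both) so that $x_1, \dots, x_n$ remain meridians of the lines of $\cL$ at this far side and $y$ is a meridian of $L$ at the same far side; thus the generating set has exactly $n+1$ elements, one meridian per line of $\cL \cup L$, as required by the definition. The one minor point that requires care, and which I expect to be the only real obstacle, is to check that choosing the far side for $\cL \cup L$ is compatible with the far side originally used for $\cL$---but since $L$ meets $\cL$ transversally in simple points only, any sufficiently generic far side works for both simultaneously, and the Oka--Sakamoto isomorphism is the identity on the old meridians. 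The projective case follows from the affine one by the standard observation that projectivization only deletes one generator at infinity, which does not affect the conjugation-free property.
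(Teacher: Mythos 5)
Your proposal is correct and is exactly the argument the paper intends: the paper states this observation as an immediate consequence of the Oka--Sakamoto theorem (Theorem~\ref{OS_thm}) and gives no further proof, and your application of that theorem with $C_1=\cL$, $C_2=L$, plus the remark that the resulting commutators $[y,x_i]=1$ are the $k=2$ instances of the conjugation-free relation form, fills in the details in the intended way. Your additional care about the compatibility of the far-side base point is a reasonable extra check that the paper simply omits.
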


In this section, we prove the following proposition, which is the next step:

\begin{prop}\label{prop_line_add}
Let $\cL$ be a real arrangement whose affine fundamental group has a conjugation-free geometric presentation.
Let $L$ be a line not in $\cL$, which passes through one intersection point $P$ of $\cL$. Then
$\cL \cup L$ is also an arrangement whose affine fundamental group has a conjugation-free geometric presentation.
\end{prop}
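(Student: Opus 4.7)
The plan is to compute a presentation of $\pi_1(\C^2 \setminus (\cL \cup L))$ using braid monodromy (following the Moishezon--Teicher or equivalent van Kampen approach) and to verify that it can be arranged in conjugation-free form. The key observation is that adding $L$ is a mild modification of $\cL$: it replaces the local cyclic relation at $P$ (of length $m$, the multiplicity of $P$ in $\cL$) by a cyclic relation of length $m+1$, while each new simple intersection of $L$ with a line of $\cL$ not through $P$ contributes only a commutator with $x_L$.

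First, I would exploit the freedom to rotate $L$ within the pencil of lines through $P$. Rotating $L$ changes the combinatorial type of $\cL \cup L$ only at the finitely many exceptional angles where $L$ passes through another intersection point of $\cL$; between these angles the fundamental group is preserved. I would fix an angle so that $L$ is non-vertical under the chosen generic projection $\pi: \C^2 \to \C$, the value $\pi(P)$ is separated from the projections of all other intersection points, and each simple intersection $L \cap L_i$ projects close enough to $\pi(P)$ that no critical value of $\pi|_\cL$ lies strictly between $\pi(L \cap L_i)$ and $\pi(P)$.

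Next, I would apply the braid monodromy algorithm. The generators are $x_1, \ldots, x_n$ together with a new meridian $x_L$. Relations coming from points of $\cL$ different from $P$ are unchanged and remain conjugation-free by hypothesis. At each new simple intersection $L \cap L_i$, the favorable clustering from step one yields a plain commutator $[x_L,x_i]=1$ with no conjugation. At $P$, now of multiplicity $m+1$, the local monodromy produces the cyclic relation on $x_{j_1}, \ldots, x_{j_m}, x_L$; after assigning $x_L$ the appropriate index (consistent with the ordering of lines at infinity), this is exactly a conjugation-free cyclic relation of length $m+1$.

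The main obstacle is the previous step: certifying that the commutators at the new simple intersections carry no conjugation. In the standard braid monodromy setup such a commutator gets conjugated by the monodromy along a path from infinity to the intersection point, which can be nontrivial when the path crosses other multiple points. The resolution is to push the rotational argument further: by choosing $L$ close enough to an existing line $L_k$ through $P$, each simple intersection $L \cap L_i$ lies in a small neighborhood of $L_k \cap L_i$, so the local braiding reduces to that of $L_k$, which is controlled by the conjugation-free presentation of $\cL$. A case analysis according to the relative position of $L \cap L_i$ with respect to $P$ and $L_k$ will then convert each superficially conjugated relation into a plain commutator, using the existing conjugation-free cyclic relation at $P$. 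Observation~\ref{obs_line_add} naturally serves as the base case in an induction on the number of simple intersections of $L$ with lines of $\cL$.
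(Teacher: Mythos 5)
Your overall strategy (braid monodromy after positioning $L$ favorably, then checking each relation) is the same as the paper's in outline, but there are two genuine gaps. First, and most importantly, your claim that ``relations coming from points of $\cL$ different from $P$ are unchanged and remain conjugation-free by hypothesis'' misreads the hypothesis. What is assumed is that the presentation of $\pi_1(\C^2\setminus\cL)$ \emph{can be simplified} to a conjugation-free one; the raw van Kampen relations at the old points still carry conjugations, and the simplification process that removes them may invoke the cyclic relation $R_P$ coming from $P$. After adding $L$, that relation is replaced by a longer one $\tilde R_P$ involving the new generator $x_L$, so the old simplification does not apply verbatim. The paper's proof spends most of its effort exactly here: it splits the old relations into those whose simplification never uses $R_P$ (these go through unchanged, up to an index shift) and those that do, and for the latter shows one can conjugate by $x_L$ and diffuse $x_L$ through the new commutators so that $\tilde R_P$ can be used in place of $R_P$. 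Your proposal does not address this step at all, and it is the crux of the proposition.

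Second, your treatment of the new simple intersections is left as an unexecuted ``case analysis,'' and the positioning you choose makes it harder than necessary. The paper's normalization is more decisive: it places $P$ as the leftmost and lowest point and pushes \emph{all} the new simple intersections of $L$ to the left of every other intersection point of $\cL$ (justified by the deformation results of Garber--Teicher--Vishne and of Fan), so that the computed skeletons for these points are the standard ones and the resulting relations are plain commutators with nothing to simplify. Your alternative (placing $L$ close to a line $L_k$ through $P$ so that each $L\cap L_i$ is near $L_k\cap L_i$) does not obviously achieve this: the monodromy paths to those points still sweep past the other singular points of $\cL$, and ``the local braiding reduces to that of $L_k$'' is not something you can extract from the conjugation-free hypothesis, which is a property of the simplified group presentation rather than of local braid data. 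Finally, the closing induction on the number of simple intersections of $L$ with Observation~\ref{obs_line_add} as base case is not coherent: that observation concerns a line transversal to $\cL$, which is not a degenerate case of a line through $P$, and reducing the number of simple intersections of $L$ means deleting lines of $\cL$, which changes the arrangement whose conjugation-free property you are assuming.
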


\begin{proof}
We can assume that the point $P$ is the leftmost and lowest point of the arrangement $\cL$ and all the intersection points of the line $L$ (except for $P$) are to the left of all the intersection points of the arrangement $\cL$ (except for $P$). We can also assume that the highest line in $\cL$ (with respect to the global numeration of the lines) passes through $P$. See Figure \ref{fig_L_union_l} for an illustration, where the arrangement $\cL$ is in the dashed rectangle.

\begin{figure}[!ht]
\epsfysize 4cm
\centerline{\epsfbox{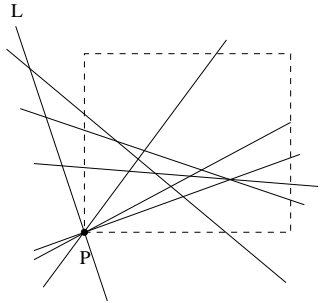}}
\caption{An illustration of the real part of $\cL \cup L$}\label{fig_L_union_l}
\end{figure}

The above assumption is due to the following: First, one can rotate a line that participates in only one multiple point as long as it does not unite with a different line (by Results 4.8 and 4.13 of \cite{GTV}). Second, moving a line that participates in only one multiple point over a different line (see Figure \ref{triangle-line}) is permitted in the case of a triangle due to a result of Fan \cite{Fa2} that the family of configurations with $6$ lines and three triple points is connected by a finite sequence of smooth equisingular deformations. Moreover, by Theorem
4.11 of \cite{GTV}, one can assume that the point $P$ is the leftmost point of the arrangement $\cL$.

\begin{figure}[!ht]
\epsfysize 3cm
\centerline{\epsfbox{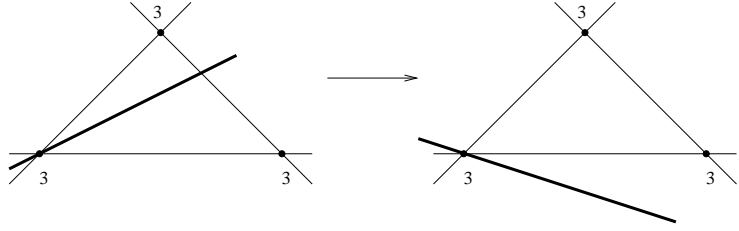}}
\caption{Moving a line that participates in only one multiple point over a different line}\label{triangle-line}
\end{figure}

Let $n$ be the number of lines in $\cL$ and let $m$ be the multiplicity of $P$ in $\cL$.
So, the list of Lefschetz pairs of the arrangement $\cL$ is $$([a_1,b_1],[a_2,b_2],\dots,[a_{q-1},b_{q-1}],[1,m]),$$
where the Lefschetz pair $[1,m]$ corresponds to the point $P$ (for the theory used here for computing the fundamental group of the complements of arrangements, see \cite{EGT,GaTe,vK,BGT1}).
Since we have that $\pi_1(\C^2-\cL)$ has a conjugation-free geometric presentation, then we know that all the conjugations in the relations
induced by the van Kampen Theorem \cite{vK} (see also \cite{EGT}) can be simplified.

Now, let us deal with the arrangement $\cL \cup L$. By our assumptions, its list of Lefschetz pairs is (we write in small brackets the name of the corresponding point):
$$([a_1+1,b_1+1]_{(p_1)},[a_2+1,b_2+1]_{(p_2)},\dots,[a_{q-1}+1,b_{q-1}+1]_{(p_{q-1})},$$
$$[1,m+1]_{(p_q)},[m+1,m+2]_{(p_{q+1})},[m+2,m+3]_{(p_{q+2})},\dots, [n,n+1]_{(p_{q+(n-m)})} ).$$

\medskip

We start with the relations induced from intersection points on the line $L$. We first choose a set of $n+1$ generators of the fundamental group of its complement corresponding to its lines, namely $\{x_1,\dots,x_{n+1}\}$.
By the Moishezon-Teicher algorithm \cite{GaTe,BGT1} (see also \cite{EGT}), we now compute the skeletons corresponding to the points on the line $L$ (i.e. the points $p_j$, where $q \leq j \leq q+n-m$). Note that:
\begin{tiny}$$ \Delta \langle a_1+1,b_1+1 \rangle \Delta \langle a_2+1,b_2+1 \rangle \cdots \Delta \langle a_{q-1}+1,b_{q-1}+1 \rangle = \Delta \langle 2,n+1 \rangle \Delta ^{-1} \langle 2,m+1 \rangle,$$\end{tiny}
\hspace{-5pt}since given an arrangement, the multiplication of all the halftwists based on its Lefschetz pairs is equivalent to a unique halftwist of all the lines. By this observation, we get the skeletons in Figure \ref{bm_L_union_l}.

\begin{figure}[!ht]
\epsfysize 2cm
\centerline{\epsfbox{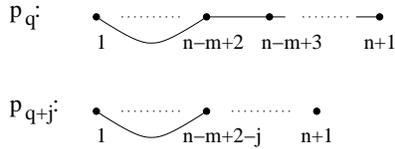}}
\caption{The skeletons of the points $p_{q+j}$ where\break $0 \leq j \leq n-m$}\label{bm_L_union_l}
\end{figure}

Hence, we get the following relations:

\medskip

\noindent
For the point $p_q$:
\begin{eqnarray*}
x_{n+1} x_n\cdots x_{n-m+2} x_1 & = & x_1 x_{n+1} x_n\cdots x_{n-m+2}=\\
&=& \cdots = x_n\cdots x_{n-m+2} x_1 x_{n+1}
\end{eqnarray*}
For the points $p_j$, where $q+1 \leq j \leq q+n-m$:
$$x_{n-m+2-j} x_1=  x_1 x_{n-m+2-j}$$
These relations are obviously without conjugations.

\medskip

Now, we move to the relations induced from points appearing in the original arrangement $\cL$.
The only change in the level of the Lefschetz pairs is an addition of one index in all the pairs, due to the line $L$.
Therefore, the induced braid monodromy and the relations will be changed by adding $1$ to every index, i.e.,
if we have a relation which involves the generators $x_{i_1},\dots, x_{i_k}$, then after adding the line, we have the same relation but with
generators $x_{i_1+1},\dots, x_{i_k+1}$, respectively.

Now, we know that the fundamental group of $\cL$ has a conjugation-free geometric presentation;
hence we have that by a simplification process, one can reach a presentation without conjugations.
If we imitate the simplification process of the presentation of the fundamental group of $\cL$ for the presentation of the fundamental group of $\cL \cup L$,
the cases in which we need to use the relations induced from the point $P$ are the relations that have been simplified by using the relations induced from $P$ before adding the line.
As above,  the  original relations induced from $P$ are:
\begin{eqnarray*}
R_p: \quad x_n x_{n-1}\cdots x_{n-m+1} &=& x_{n-1}\cdots x_{n-m+1} x_n =\\
& = & \cdots = x_{n-m+1} x_n \cdots x_{n-m+2},
\end{eqnarray*}
while the new ones are:
\begin{eqnarray*}
\tilde{R}_p: \quad x_{n+1} x_n\cdots x_{n-m+2} x_1 & = & x_1 x_{n+1} x_n\cdots x_{n-m+2}=\\
& = & \cdots = x_n\cdots x_{n-m+2} x_1 x_{n+1}.
\end{eqnarray*}

We can divide the relations induced from $\cL$ before adding the line $L$ into two subsets:
\begin{enumerate}
\item Relations that during the simplification process contain the subword $x_{n-m+2}^{-1} \cdots x_{n-1}^{-1} x_n x_{n-1} \cdots x_{n-m+2}$.
\item Relations that do not contain the above subword during its simplification process.
\end{enumerate}

For the second subset, the simplification process will be identical before adding the line $L$ and after it, since all the other relations induced by $\cL$ have not been changed by adding the line $L$ (except for adding 1 to the indices).

For the first subset, let us denote the relation by $R$.  Except for applying the relations induced from $P$, the rest of simplification process is identical to the one before adding the line (again, except for adding 1 to the indices). The only change is in the step of applying $R_p$. In this step, before adding the line $L$, the generator $x_1$ has not been involved in $R_p$, but after adding the line $L$, it appears in $\tilde{R}_p$. Hence, for applying $\tilde{R}_p$, we have to conjugate relation $R$ by $x_1$, and using the commutative relations which $x_1$ is involved in, we can diffuse $x_1$ into the relation $R$, so we can use the relation $\tilde{R}_p$ instead of $R_p$.

Hence, we can simplify all the conjugations in all the relations, so we have a conjugation-free geometric presentation, as needed.
\end{proof}

\begin{rem}
Note that adding a line which closes a cycle in $\cL$ might not preserve the conjugation-free geometric presentation property. For example, adding a line to an arrangement of 5 lines which creates the Ceva arrangement (see Figure \ref{ceva}) is not an action which preserves the conjugation-free geometric presentation property.
\end{rem}

Hence, we can extend the family of arrangements whose fundamental groups have a conjugation-free geometric presentation. We start with the following definition:

\begin{defn}\label{def-cycle-tree}
A {\em cycle-tree graph} is a graph which consists of a cycle, where each vertex of the cycle can be a root  of a tree, see Figure \ref{cycle-tree}.
It is possible that there exist some vertices also in the middle of an edge of the cycle or the trees.

\begin{figure}[!ht]
\epsfysize 3cm
\centerline{\epsfbox{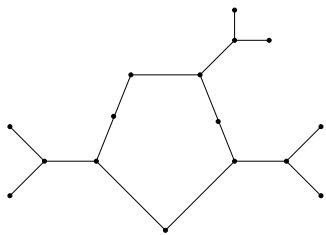}}
\caption{An example of a cycle-tree graph}\label{cycle-tree}
\end{figure}

\end{defn}

\begin{cor}
Let $\cL$ be a real line arrangement whose graph is a disjoint union of cycle-tree graphs. Then the fundamental group of $\cL$ has a conjugation-free geometric presentation.
\end{cor}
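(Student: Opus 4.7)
My plan is to induct on the number of tree vertices of $G(\cL)$, i.e., vertices lying off the cycle components. The base case --- no tree vertices, so $G(\cL)$ is a disjoint union of cycles --- is the main result of \cite{EGT} recalled in the introduction.

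For the inductive step I would pick a tree leaf $w$ of $G(\cL)$, let $v$ be its unique neighbour, and let $L_0\in\cL$ be the line carrying the tree edge $(v,w)$. Since $w$ is a leaf, each of the other $m_w-1$ lines of $\cL$ through $w$ must meet no further multiple point of $\cL$ (otherwise it would produce a second edge at $w$). Deleting those $m_w-1$ lines yields a subarrangement $\cL^*$ in which $w$ has multiplicity one and therefore is not an intersection point at all, while every other multiple point of $\cL$ retains all of its lines. Thus $G(\cL^*)=G(\cL)\setminus\{w\}$ is again a disjoint union of cycle-tree graphs, with one fewer tree vertex, and the induction hypothesis supplies a conjugation-free geometric presentation for $\pi_1(\C^2\setminus\cL^*)$.

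I would then reinsert the deleted lines one at a time as $L_1,\dots,L_{m_w-1}$. The first, $L_1$, should meet $\cL^*$ transversally: the intersection $L_0\cap L_1=\{w\}$ is a new simple point (since $w$ is not yet an intersection point of $\cL^*$), and because $L_1$ passes through no other multiple point of $\cL$ it cannot pass through any intersection point of $\cL^*\subseteq\cL$. Observation~\ref{obs_line_add} therefore applies and makes $w$ a simple intersection of $\cL^*\cup\{L_1\}$. Each subsequent $L_i$ then passes through exactly the single intersection point $w$ of the already-built arrangement and is otherwise transversal, so Proposition~\ref{prop_line_add} applies repeatedly: $L_2$ turns $w$ into a triple point, and each further insertion raises its multiplicity by one. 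The conjugation-free property is preserved at every step, and after $L_{m_w-1}$ we recover $\cL$.

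The step I expect to need the most care is the first application of Proposition~\ref{prop_line_add}, where the base point $P=w$ is still only a simple intersection (multiplicity $m=2$). A direct inspection of the Lefschetz-pair computation in its proof with $m=2$ should yield the conjugation-free triple-point relation $x_{n+1}x_nx_1=x_1x_{n+1}x_n=x_nx_1x_{n+1}$, but it is worth recording explicitly that the proposition truly admits a simple intersection as its base point. The leaf hypothesis on $w$ is equally essential: it forces the $m_w-1$ removed lines to carry no other multiple point, so that both the reduction to $\cL^*$ stays inside the cycle-tree class and each reinsertion fits the hypotheses of either Observation~\ref{obs_line_add} or Proposition~\ref{prop_line_add}.
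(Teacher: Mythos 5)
Your proposal is correct and takes essentially the same route as the paper: start from the cycle case of \cite{EGT} and rebuild the arrangement line by line, invoking Observation~\ref{obs_line_add} for the transversal additions and Proposition~\ref{prop_line_add} for the lines through a single intersection point. The only differences are that your leaf-peeling induction makes precise the paper's one-line claim that such a construction ``can easily'' be carried out, and that by taking the full disjoint union of cycles as your base case you render the paper's separate appeal to the Oka--Sakamoto theorem for disjoint components unnecessary.
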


\begin{proof}
We start by proving that a real arrangement whose graph is a cycle-tree graph has a fundamental group which has a conjugation-free geometric presentation.
We already have from \cite{EGT} that a real arrangement whose graph is a cycle has a fundamental group which has a conjugation-free geometric presentation. By Proposition \ref{prop_line_add}, adding a line which is either transversal to an arrangement or passes through one intersection point, preserves the property that the fundamental group has a conjugation-free geometric presentation. One can easily construct an arrangement whose graph is a cycle-tree graph from an arrangement whose graph is a cycle by inductively adding a line which is either transversal to the arrangement or passes through one of its intersection points. Hence, we get that an arrangement whose graph is a cycle-tree graph has a fundamental group which has a conjugation-free geometric presentation.

In the next step, using the theorem of Oka and Sakamoto \cite{OkSa} (see Theorem \ref{OS_thm} above), we can generalize the result from the case of one cycle-tree graph to the case of a disjoint union of cycle-tree graphs.
\end{proof}


\section{Complemented presentations}\label{sec-complemented}

A {\em semigroup presentation} $(\mathcal S,\mathcal R)$ consists of a nonempty set $\mathcal S$ and a family of pairs of nonempty words
$\mathcal R$ in the alphabet $\mathcal S$. The corresponding monoid $(\mathcal S,\mathcal R)$ is
$\langle \mathcal S | \mathcal R \rangle ^+ \cong (\mathcal S^*/\equiv _{\mathcal R}^+)$.

Dehornoy \cite{Deh1} has defined the notion of a {\it complemented presentation} of a semigroup:

\begin{defn}
A semigroup presentation $(\mathcal S,\mathcal R)$ is called
{\em complemented} if, for each $s \in \mathcal S$, there is no relation $s \ldots = s \ldots$ in $\mathcal R$ and, for $s, s' \in \mathcal S$,
there is at most one relation $s \ldots = s' \ldots$ in $\mathcal R$.
\end{defn}

Our type of presentations satisfies this property as follows:
\begin{lem}
 A conjugation-free geometric presentation is a complemented presentation.
\end{lem}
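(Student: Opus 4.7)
The plan is to verify Dehornoy's two conditions directly from the structure of the relations. Recall that in a conjugation-free geometric presentation every relation comes from a multiple intersection point: a point $P$ through which lines $i_1<i_2<\cdots<i_k$ pass produces the cyclic chain
$$x_{i_k}x_{i_{k-1}}\cdots x_{i_1}=x_{i_{k-1}}\cdots x_{i_1}x_{i_k}=\cdots =x_{i_1}x_{i_k}\cdots x_{i_2}.$$
The whole set $\mathcal R$ of defining relations for the monoid is obtained by taking pairs of words from such chains, one chain per multiple point of $\cL$.

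First I would check that no relation has the form $s\cdots =s\cdots$. Inside a single chain the $k$ listed words are cyclic rotations of the same word, and their initial letters are exactly $x_{i_k},x_{i_{k-1}},\dots,x_{i_1}$, which are pairwise distinct by construction. Hence any pair of words selected from one chain starts with two different generators, and reflexive-start relations cannot arise.

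For the uniqueness condition I would use the elementary fact that two distinct lines of $\cL$ meet in at most one point. Suppose that for some pair of distinct generators $x_s,x_{s'}$ there are two relations $u=v$ and $u'=v'$ in $\mathcal R$ with $u,u'$ beginning in $x_s$ and $v,v'$ beginning in $x_{s'}$. Each such relation is a pair of words drawn from the chain of some multiple point $P$, and each of these chains necessarily involves both lines $s$ and $s'$ (since the initial letter of one word is $x_s$ and of the other is $x_{s'}$). But then $P$ lies on the line $s$ and on the line $s'$, so $P$ is uniquely determined as $\ell_s\cap\ell_{s'}$; thus both relations come from the same chain. Within that chain there is exactly one cyclic rotation beginning with $x_s$ and exactly one beginning with $x_{s'}$, so $u=u'$ and $v=v'$, proving uniqueness.

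No step seems genuinely obstructive; the only mildly delicate point is the projective case, where one generator is suppressed. I would handle this by noting that removing one letter from the alphabet simply deletes the relations in which it appears and never merges two distinct relations, so both conditions remain valid. This completes the verification that a conjugation-free geometric presentation is complemented.
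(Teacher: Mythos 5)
Your proof is correct and follows essentially the same route as the paper's: the uniqueness condition comes from the fact that two distinct lines meet in at most one point, and the absence of relations of the form $s\cdots=s\cdots$ comes from the distinct initial letters of the cyclic rotations in each chain. You simply spell out in full detail what the paper compresses into two sentences.
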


\begin{proof}
Any pair of lines intersect exactly once, hence their corresponding generators appear as prefixes in exactly one relation.
Since there are no conjugations, this is their unique appearance as a pair of prefixes.
\end{proof}

\begin{rem} \ \\
\begin{enumerate}
\item This property does not hold for presentations of fundamental groups in general (due to the conjugations in the relations).
\item This property does not hold in the homogeneous minimal presentations introduced by Yoshinaga \cite{Yos}.
\end{enumerate}
\end{rem}

\section{Complete presentations}\label{sec-complete}

In this section, we will study which cases of conjugation-free geometric presentations are also {\em complete} in the sense of Dehornoy \cite{Deh2}.
The completeness property is a very important and powerful property.  In Section \ref{complete_back},
we supply some background on this property and some of its applications. In Section \ref{result}, we present our results in this direction.

\subsection{Background on complete presentations}\label{complete_back}

We follow the survey of Dehornoy \cite{Deh3}.
We start by defining the notion of a {\it word reversing}:

\begin{defn}
For a semigroup presentation $(\mathcal S,\mathcal R)$ and\break $w,w' \in ({\mathcal S} \cup {\mathcal S}^{-1})^*$,
{\em $w$ reverses to $w'$ in one step}, denoted by $w \curvearrowright _{\mathcal R} ^1 w'$,
if there exist a relation $sv' = s'v$ of $\mathcal R$ and $u,u'$ satisfying:
$$ w = us^{-1}s'u' \ {\rm and} \  w'= uv'v^{-1}u'.$$
We say that {\em $w$ reverses to $w'$ in $k$ steps}, denoted by $w \curvearrowright _R ^k w'$, if there exist words $w_0, \dots ,w_k$
satisfying $w_0 = w, w_k = w'$ and $w_i \curvearrowright _R ^1 w_{i+1}$ for each $i$.
The sequence $(w_0, \dots ,w_k)$ is called {\em an $R$-reversing sequence from $w$ to $w'$}.

We write $w \curvearrowright w'$, if $w \curvearrowright _{\mathcal R} ^k w'$ holds for some $k \in \mathbb{N}$.
\end{defn}

\begin{defn}
A semigroup presentation $(\mathcal S,\mathcal R)$ is called {\em complete} if, for all words $w,w' \in \mathcal S^*$:
$$w \equiv ^+_{\mathcal R} w'  \quad \Rightarrow \quad  w^{-1}w' \curvearrowright _{\mathcal R} \varepsilon .$$
where $\varepsilon$ is the empty word.
\end{defn}

In the next definition, we define the {\it cube condition}, which is a useful tool for verifing the completeness property.

\begin{defn}
Let $(\mathcal S, \mathcal R)$ be a semigroup presentation, and\break $u, u', u'' \in \mathcal S^*$. We say that $(\mathcal S, \mathcal R)$ satisfies the {\em cube condition for $(u, u', u'')$} if:
$$u^{-1} u'' u''^{-1} u' \curvearrowright_{\mathcal R} v' v^{-1} \qquad \Rightarrow \qquad
(u v')^{-1} (v u') \curvearrowright_{\mathcal R} \varepsilon .$$

For $X \subseteq \mathcal S^*$, we say that $(\mathcal S, \mathcal R)$ satisfies {\em the cube condition on $X$} if it satisfies the cube condition for every triple $(u, u', u'')$ where $u, u', u'' \in X$.
\end{defn}

\begin{figure}[!ht]
\epsfysize 3cm
\centerline{\epsfbox{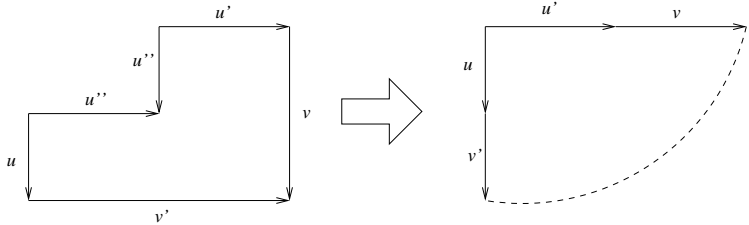}}
\caption{An illustration of the cube condition}\label{cube}
\end{figure}

\begin{defn}
A semigroup presentation $(\mathcal S, \mathcal R)$ is said to be {\em homogeneous} if there exists an $\equiv^+_{\mathcal R}$-invariant mapping $\lambda: \mathcal S^* \to \mathbb{N}$
satisfying, for $s \in \mathcal S$ and $w \in \mathcal S^*$, $$\lambda(s w) > \lambda(w).$$
\end{defn}

A typical case of a homogeneous presentation is where all relations in $\mathcal R$ preserve the length of words, i.e., they have the form
$v'=v$ where $v'$ and $v$ have the same length.

Dehornoy \cite{Deh2} has proved the following result:
\begin{prop}
Assume that $(\mathcal S, \mathcal R)$ is a homogeneous semigroup presentation.  Then
$(\mathcal S, \mathcal R)$ is complete if and only if it satisfies the cube condition on $\mathcal S$.
\end{prop}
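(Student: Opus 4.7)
The plan is to prove the two implications separately. The forward direction (complete $\Rightarrow$ cube condition) is essentially a direct unpacking of definitions, while the reverse direction (cube condition $\Rightarrow$ complete) is the substantive content, and it is there that the homogeneity hypothesis is used in an essential way.

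For the forward direction, I would first establish the standard compatibility lemma between reversing and positive equivalence: whenever $a, b$ are positive words and $a^{-1}b \curvearrowright_{\mathcal R} p q^{-1}$ with $p, q$ positive, then $ap \equiv^+_{\mathcal R} bq$. This follows by an easy induction on the number of reversing steps, applying a single relation $sv' = s'v$ at each step. Applied to the hypothesis $u^{-1} u'' u''^{-1} u' \curvearrowright_{\mathcal R} v' v^{-1}$, this lemma yields the positive equivalence $u v' \equiv^+_{\mathcal R} v u'$ (roughly, the reversing witnesses a common multiple of $u$ and $u'$ obtained by first reaching a common multiple through $u''$). Completeness applied to this positive equivalence immediately gives $(u v')^{-1}(v u') \curvearrowright_{\mathcal R} \varepsilon$, which is precisely the conclusion of the cube condition.

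For the reverse direction, I would argue in two stages. First, I would upgrade the cube condition from single generators in $\mathcal S$ to arbitrary positive words in $\mathcal S^*$, by induction on the total $\lambda$-weight of the triple $(u, u', u'')$, using homogeneity to make the induction well-founded. The inductive step decomposes one of the words into (letter) $\cdot$ (shorter word) and uses the generator-level cube condition to fuse the two resulting reversings into a single reversing, producing the desired $v', v$. Second, given $w \equiv^+_{\mathcal R} w'$, I would fix a derivation $w = w_0 \to w_1 \to \cdots \to w_n = w'$ by single relation applications and induct on $n$. The inductive step compares $w_{i-1}^{-1} w_i$ (trivially reversible via a single relation) with a reversing of $w_{i-1}^{-1} w'$ to empty, using the upgraded cube condition to glue them into a reversing of $w^{-1} w' = w_0^{-1} w_n$ to $\varepsilon$. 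Homogeneity again ensures that all intermediate words stay bounded in length, so the procedure terminates.

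The main obstacle is the upgrade of the cube condition from $\mathcal S$ to $\mathcal S^*$ in the reverse direction: one must carefully organize a double induction on word length and on reversing depth, and verify that the new $v, v'$ produced at each step really assemble into a coherent reversing sequence. This is where a conceptual ``cube diagram'' (gluing together faces built from the cube condition at smaller triples) makes the argument transparent. Without the homogeneity hypothesis, reversing sequences can grow unboundedly and the induction collapses — indeed, one can exhibit non-homogeneous presentations satisfying the cube condition on generators but failing completeness, showing that homogeneity is not a mere convenience but is genuinely required.
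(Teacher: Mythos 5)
The paper does not prove this proposition at all: it is quoted verbatim as a result of Dehornoy \cite{Deh2}, so there is no in-paper argument to compare yours against. Your outline is, in substance, a reconstruction of Dehornoy's own proof, and its architecture is sound: the compatibility lemma (reversing $a^{-1}b$ to $pq^{-1}$ witnesses $ap\equiv^+_{\mathcal R}bq$) gives the easy direction, and the converse splits into (i) propagating the cube condition from $\mathcal S$ to $\mathcal S^*$ by induction on $\lambda$-weight, and (ii) deducing completeness by induction on the length of a derivation $w=w_0\to\cdots\to w_n=w'$, applying the word-level cube condition to the triple $(w_0,w_n,w_{n-1})$ with both auxiliary reversings collapsing to $\varepsilon$. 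Two small cautions. First, in the forward direction the word $u^{-1}u''\,u''^{-1}u'$ has two sign alternations, so the compatibility lemma in the form you state (for $a^{-1}b$) does not apply directly; you need to decompose the reversing diagram into three sub-reversings and chain the resulting positive equivalences, which yields $uv'\equiv^+_{\mathcal R}u'v$ rather than $uv'\equiv^+_{\mathcal R}vu'$ --- this matches Dehornoy's formulation of the cube condition and suggests the paper's displayed conclusion $(uv')^{-1}(vu')$ carries a transposition; your write-up inherits that form, so be aware of which one you are actually proving. Second, step (i) is where essentially all the work lies, and your proposal correctly identifies it but only gestures at the gluing of cube diagrams; a complete write-up must verify that the faces produced by the generator-level cube condition really paste into a single reversing grid, and that homogeneity (not merely finiteness of each relation) is what makes the induction well-founded. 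As a sketch the proposal is correct and faithful to the standard argument; as a proof it still owes the detailed diagram-pasting lemma.
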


The next definition is needed for introducing an operation used in an equivalent condition for the cube condition:

\begin{defn}
For a complemented semigroup presentation $(\mathcal S, \mathcal R)$ and $w, w' \in \mathcal S^*$, the {\em $\mathcal R$-complement of $w'$ in $w$, denoted $w \backslash w'$}, (``$w$ under $w'$''), is the unique word $v' \in \mathcal S^*$ such that $w^{-1} w'$ reverses to $v'v^{-1}$ for some $v \in \mathcal S^*$, if such a word exists.
\end{defn}

Dehornoy \cite{Deh2} has proved that the cube condition is equivalent to some expression involving the complement operation:
\begin{prop}
Assume that $(\mathcal S, \mathcal R)$ is a complemented semigroup presentation. Then, for all words $u, u', u'' \in \mathcal S^*$, the following are equivalent:
\begin{enumerate}
\item $(\mathcal S, \mathcal R)$ satisfies the cube condition on $\{u, u', u'' \}$.
\item either $(u \backslash u') \backslash (u \backslash u'')$ and
$(u' \backslash u) \backslash (u' \backslash u'')$ are $\mathcal R$-equivalent or they are not defined,
and the same holds for all permutations of $u, u', u''$.
\end{enumerate}
\end{prop}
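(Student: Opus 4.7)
The plan is to reduce both implications to systematic computations with the reversing procedure, exploiting the fact that in a complemented presentation the reversing of $u^{-1}v$ is deterministic — either it terminates uniquely in the form $(u\backslash v)(v\backslash u)^{-1}$ or it fails to terminate in that form at all. Before splitting into the two directions, I would first extend the $\backslash$-operation from pairs of letters to pairs of positive words by induction on length (standard in Dehornoy's setup), record the identity $u(u\backslash v)\equiv^+_{\mathcal R} v(v\backslash u)$ whenever the complement is defined, and verify the compositional identity $(uv)\backslash w = v\backslash(u\backslash w)$ whenever both sides are defined. This last identity is the arithmetic fuel behind the iterated complements appearing in condition~(2).

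For $(1)\Rightarrow(2)$, I would trace the reversing of the word $u^{-1}u''u''^{-1}u'$ in a fixed order. Reversing the prefix $u^{-1}u''$ first yields $(u\backslash u'')L^{-1}u'$, where $L=u(u\backslash u'')=u''(u''\backslash u)$ is the join of $u$ and $u''$; a second phase of reversing on $L^{-1}u'$ produces $v'v^{-1}$ with $v'=(u\backslash u'')(L\backslash u')$ and $v=u'\backslash L$. The cube condition then gives $(uv')^{-1}(vu')\curvearrowright\varepsilon$, which via the compositional identity $(uv)\backslash w = v\backslash(u\backslash w)$ translates into the $\mathcal R$-equivalence of the two natural expressions for the ``step from the join of $\{u,u'\}$ to the join of $\{u,u',u''\}$'', namely $(u\backslash u')\backslash(u\backslash u'')$ and $(u'\backslash u)\backslash(u'\backslash u'')$. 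Running the same argument over all three choices of distinguished pair in $\{u,u',u''\}$ yields condition~(2) for every permutation.

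For $(2)\Rightarrow(1)$, I would run the analysis in reverse: assume condition~(2), suppose $u^{-1}u''u''^{-1}u'\curvearrowright v'v^{-1}$, and use determinism of reversing to identify $v'$ and $v$ as the same iterated complements analyzed above. Condition~(2) then chains the required $\mathcal R$-equivalences so that both $uv'$ and $vu'$ represent the common join $J$ of $u,u',u''$, and a standard lemma of Dehornoy valid for complemented presentations converts such a coincidence into a terminating reversing of $(uv')^{-1}(vu')$ to $\varepsilon$. The main obstacle will be the degenerate case in which one of the pairwise complements is undefined: I must check that the failure of any individual $u_i\backslash u_j$ forces the reversing appearing in the cube condition to also fail to terminate in $v'v^{-1}$ form, so that (1) holds vacuously and the corresponding side of (2) is simultaneously undefined, with symmetric verifications handling the remaining permuted triples.
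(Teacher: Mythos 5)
The paper itself gives no proof of this proposition: it is quoted as background from Dehornoy [Deh2] (where it appears as a lemma characterizing the cube condition for complemented presentations), so there is no in-paper argument to compare yours against. Judged on its own terms, your sketch assembles the right toolkit --- the extension of $\backslash$ to words, the identities $u(u\backslash v)\equiv^+_{\mathcal R}v(v\backslash u)$ and $(uv)\backslash w=v\backslash(u\backslash w)$, and the determinism of reversing for complemented presentations --- and your computation of the terminal form $v'v^{-1}$ of $u^{-1}u''u''^{-1}u'$ is essentially the right one.

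There is, however, a genuine gap in your direction $(2)\Rightarrow(1)$. You argue that condition (2) forces $uv'$ and $vu'$ to represent the same element (the join of $u,u',u''$) and then invoke ``a standard lemma of Dehornoy valid for complemented presentations'' to convert $uv'\equiv^+_{\mathcal R}vu'$ into $(uv')^{-1}(vu')\curvearrowright_{\mathcal R}\varepsilon$. No such lemma exists: the implication $w\equiv^+_{\mathcal R}w'\Rightarrow w^{-1}w'\curvearrowright_{\mathcal R}\varepsilon$ is exactly the definition of completeness, which is the property the cube condition is designed to detect and which fails for complemented presentations in general; the step is circular. A symmetric problem affects your $(1)\Rightarrow(2)$: to pass from $uv'\equiv^+_{\mathcal R}vu'$ to the $\mathcal R$-equivalence of the two double complements you must cancel a common prefix, and left-cancellativity is likewise only a consequence of completeness, not a hypothesis. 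The correct argument stays at the level of reversing diagrams throughout: the statement $(uv')^{-1}(vu')\curvearrowright\varepsilon$ is decomposed into subdiagrams each asserting that a specific complement is the \emph{empty word}, and it is these emptiness statements (not bare $\equiv^+_{\mathcal R}$-equivalences) that are chained through the identities $(uv)\backslash w=v\backslash(u\backslash w)$ and $u\backslash(vw)=(u\backslash v)\bigl((v\backslash u)\backslash w\bigr)$. Relatedly, be careful writing $L=u(u\backslash u'')=u''(u''\backslash u)$: these words are $\equiv^+_{\mathcal R}$-equivalent but in general distinct, and $\backslash$ is a function of literal words that is not known to be $\equiv^+_{\mathcal R}$-invariant before completeness is established; the word actually produced by the reversing is $u''(u''\backslash u)$.
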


Here, we survey some important consequences and applications arising
from the completeness property.

\begin{prop}[\cite{Deh2}, Proposition 6.1]
Every monoid that admits a complete complemented presentation is left-cancellative (i.e., $xy=xz \Rightarrow y=z$).
\end{prop}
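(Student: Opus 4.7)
The plan is to unfold the completeness hypothesis into a statement about word reversing, simplify the reversal by exploiting the very particular structure of the word in question, and then read off left cancellation from the general interpretation of reversing sequences.

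Assume $xy \equiv^+_{\mathcal R} xz$. Applying completeness to this equivalence immediately gives $(xy)^{-1}(xz) \curvearrowright_{\mathcal R} \varepsilon$, i.e.,
$$y^{-1}x^{-1}xz \curvearrowright_{\mathcal R} \varepsilon.$$
In this word the only position at which a reversing step can be initiated is the central $x^{-1}x$, since this is the unique place where a negative letter is immediately followed by a positive one. The complemented hypothesis forbids any relation of $\mathcal{R}$ of the form $x\cdots = x\cdots$, and by the standard reversing convention $x^{-1}x$ then contracts to the empty word. Every reversing sequence starting from $y^{-1}x^{-1}xz$ is therefore forced to begin with the one-step move
$$y^{-1}x^{-1}xz \curvearrowright_{\mathcal R}^1 y^{-1}z,$$
and combining this forced first step with the preceding display yields $y^{-1}z \curvearrowright_{\mathcal R} \varepsilon$.

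To finish, I would invoke the general interpretation of word reversing: each one-step reversal $us^{-1}s'u' \curvearrowright uv'v^{-1}u'$ is the direct transcription of the relation $sv'=s'v \in \mathcal{R}$, so any sequence $w_1^{-1}w_2 \curvearrowright_{\mathcal R} v_2 v_1^{-1}$ with positive words $w_i,v_i$ witnesses the monoid equality $w_1 v_2 \equiv^+_{\mathcal R} w_2 v_1$. Specialising to $w_1=y$, $w_2=z$ and $v_1=v_2=\varepsilon$ gives $y \equiv^+_{\mathcal R} z$, which is exactly left-cancellativity.

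The main point I expect to address carefully is the determinism of the first reversing step: the complemented hypothesis makes one-step reversing unique at each eligible position, and the specific word $y^{-1}x^{-1}xz$ has exactly one eligible position. A mild side subtlety is the treatment of the pattern $s^{-1}s$ where source and target letters coincide; the cleanest resolution is to adopt Dehornoy's convention that $s^{-1}s$ reverses to $\varepsilon$, equivalently to admit the trivial relation $s=s$ (with $v=v'=\varepsilon$) in the reversing rule. Once this is in place, the chain of implications above is routine, and the only non-trivial ingredient used is completeness itself, applied in the very first line.
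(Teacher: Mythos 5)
The paper gives no proof of this statement: it is imported verbatim, with attribution, as Proposition 6.1 of Dehornoy \cite{Deh2}, so there is no internal argument to compare against. Your reconstruction is correct and is essentially Dehornoy's own proof: completeness turns $xy \equiv^+_{\mathcal R} xz$ into $y^{-1}x^{-1}xz \curvearrowright_{\mathcal R} \varepsilon$, the unique negative--positive frontier forces the reversing to begin by cancelling $x^{-1}x$ (one forced step per letter of $x$ if $x$ is a word of length greater than one, or alternatively reduce to a single generator by induction on the length of $x$), and the soundness lemma for reversing converts $y^{-1}z \curvearrowright_{\mathcal R} \varepsilon$ into $y \equiv^+_{\mathcal R} z$. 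The one point that genuinely needed care is the one you flagged: with the definition of reversing as stated in this paper, a step requires a relation $sv'=s'v$ of $\mathcal R$, while complementedness forbids any relation $s\ldots = s\ldots$, so $s^{-1}s$ would be irreducible; adopting Dehornoy's convention that the trivial relations $s=s$ are implicitly present (so $s^{-1}s \curvearrowright \varepsilon$) is exactly the right fix, and without it the argument, and indeed the statement, would break down. The only remaining debt is that the soundness lemma you invoke at the end (a reversing sequence $w_1^{-1}w_2 \curvearrowright v_2 v_1^{-1}$ witnesses $w_1 v_2 \equiv^+_{\mathcal R} w_2 v_1$) is itself a nontrivial, though standard, induction on reversing diagrams and should be cited explicitly rather than treated as immediate.
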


\begin{prop}[\cite{Deh2}, Proposition 6.10]
Assume that $(\mathcal S, \mathcal R)$ is a complete semigroup presentation. If $(\mathcal S,\mathcal R)$ is
complemented, then the monoid $\langle \mathcal S | \mathcal R \rangle^+$ admits least common multiples.
\end{prop}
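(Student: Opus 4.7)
\emph{Approach.} The plan is to exhibit a canonical candidate for the least common multiple of any two elements represented by words $u, u' \in \mathcal{S}^*$ that possess a common right multiple, and then verify the two defining properties of an LCM. The natural candidate is the word $u(u \backslash u')$, where $u \backslash u'$ is the complement produced by word reversing. I would first observe that complementedness makes each individual reversing step deterministic (for each pair of generators $s, s'$ there is at most one relation of the form $sv' = s'v$), so that $u \backslash u'$ is well-defined as soon as the reversing of $u^{-1}u'$ terminates in the normal form $v'v^{-1}$; and that completeness ensures termination precisely when a common right multiple exists, via the equivalence $u \equiv^+_{\mathcal{R}} u'$ implies $u^{-1}u' \curvearrowright_{\mathcal{R}} \varepsilon$ applied to $(ua)^{-1}(u'b)$.

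\textbf{Step 1: $u(u \backslash u')$ is a common right multiple.} Each single-step reversal $s^{-1}s' \curvearrowright v'v^{-1}$ is, by construction, the instance of the defining relation $sv' = s'v$. Thus a straightforward induction on the length of the reversing sequence $u^{-1}u' \curvearrowright_{\mathcal{R}} (u \backslash u')(u' \backslash u)^{-1}$ translates, step by step, into a chain of equalities in the monoid, yielding
$$u\,(u \backslash u') \;\equiv^+_{\mathcal{R}}\; u'\,(u' \backslash u).$$
This is already the crux of why reversing is useful: it converts negative/positive patterns into genuine monoid equalities.

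\textbf{Step 2: minimality.} Let $z$ be any common right multiple, say $z \equiv^+_{\mathcal{R}} ua \equiv^+_{\mathcal{R}} u'b$. I want to produce $c$ with $z \equiv^+_{\mathcal{R}} u(u \backslash u')c$, equivalently $a \equiv^+_{\mathcal{R}} (u \backslash u')\,c$. Completeness gives the global reversing $a^{-1}u^{-1}u'b = (ua)^{-1}(u'b) \curvearrowright_{\mathcal{R}} \varepsilon$. Using the determinism of reversing in a complemented presentation together with the equivalence between the cube condition and the complement-based formulation recalled in the preceding sections, one can reorder this reversing so that it first reverses the central block $u^{-1}u'$, producing the intermediate word
$$a^{-1}\,(u \backslash u')\,(u' \backslash u)^{-1}\,b$$
which still reverses to $\varepsilon$. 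Confluence then forces the two flanking reversings $a^{-1}(u \backslash u') \curvearrowright c \cdot \varepsilon^{-1}$ and $(u' \backslash u)^{-1}b \curvearrowright \varepsilon \cdot d^{-1}$ to split off a common prefix: $a$ must admit $u \backslash u'$ as a left divisor, so $a \equiv^+_{\mathcal{R}} (u \backslash u')c$ with $c = (u \backslash u') \backslash a$, and symmetrically $b \equiv^+_{\mathcal{R}} (u' \backslash u)d$. This is exactly the divisibility statement needed.

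\textbf{Main obstacle.} The delicate point is the reordering used in Step 2: justifying that a terminating reversing of $a^{-1}u^{-1}u'b$ to $\varepsilon$ can be rearranged so that the inner $u^{-1}u'$ block is processed first without altering the final outcome. This confluence of reversing orders is precisely where completeness (rather than mere complementedness) is essential, and it is the content that distinguishes this statement from the purely formal Step 1. Once confluence is in hand, extracting the prefix $u \backslash u'$ from $a$ and assembling the LCM is a bookkeeping exercise. Uniqueness of the LCM (up to the appropriate equivalence) then follows from the left-cancellativity of the monoid noted in the previously stated Proposition, since two candidate LCMs would each divide the other.
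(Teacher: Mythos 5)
The paper does not prove this statement at all: it is imported verbatim from Dehornoy's work (Proposition~6.10 of \cite{Deh2}) and used as a black box, so there is no internal proof to compare against. Judged on its own, your reconstruction follows the standard argument from \cite{Deh2} and is essentially sound: the candidate $u(u\backslash u')$, the translation of a reversing sequence into a monoid equality (Step~1), and the grid decomposition of the reversing of $(ua)^{-1}(u'b)$ into the four sub-reversings, from which $(ua)\backslash(u'b)=\bigl(a\backslash(u\backslash u')\bigr)\cdot\bigl(\ldots\bigr)=\varepsilon$ forces $a\backslash(u\backslash u')=\varepsilon$ and hence $a\equiv^+_{\mathcal R}(u\backslash u')\,c$ with $c=(u\backslash u')\backslash a$. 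One small correction to your ``main obstacle'' paragraph: the confluence/order-independence of reversing is a consequence of complementedness alone (each cell of the reversing grid admits at most one filling, so the grid is deterministic), not of completeness; what completeness supplies --- and what you do invoke correctly at the start of Step~2 --- is that the equivalence $ua\equiv^+_{\mathcal R}u'b$ guarantees the reversing of $(ua)^{-1}(u'b)$ actually reaches $\varepsilon$, which is false for a general complemented presentation. With that attribution straightened out, the extraction of the prefix and the uniqueness via left-cancellativity are fine.
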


\subsection{Completeness of conjugation-free geometric presentations} \label{result}
In this section, we prove that a conjugation-free geometric presentation is complete if its corresponding graph has no edges:

\begin{prop}
Let $\cL$ be a real arrangement whose fundamental group has a conjugation-free geometric presentation and its graph $G(\cL)$ has no edges. Then the presentation of the corresponding monoid is complete (and complemented).
\end{prop}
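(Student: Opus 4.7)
Since the preceding lemma shows the presentation is complemented, and since every relation preserves word length (so the presentation is homogeneous with $\lambda(w)=|w|$), the plan is to invoke Dehornoy's criterion: completeness is equivalent to the cube condition on $\mathcal{S}$. I would therefore verify the cube condition for an arbitrary triple of generators $(x_i,x_j,x_k)$, using the assumption that $G(\cL)$ has no edges — equivalently, every line passes through at most one multiple point of multiplicity $\geq 3$, so that two lines sharing such a point $P$ cannot share any other multiple point of multiplicity $\geq 3$.

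This dichotomy yields exactly three configurations for the three corresponding lines: (A) all three meet at a common multiple point $P$; (B) exactly two share a multiple point $P$ while the third meets them only at simple points (or is parallel); (C) no pair shares a multiple point of multiplicity $\geq 3$. The key structural remark for Case~(A) is that at a multiple point $P$ of multiplicity $m$ with lines $y_1,\ldots,y_m$, the cyclic relation reads $E_1=\cdots=E_m$, where each $E_j=y_jW_j$ begins with the unique letter $y_j$; the pair-relation between $y_i$ and $y_j$ is therefore exactly $y_iW_i=y_jW_j$, so that $y_i\backslash y_j=W_i$ \emph{independently of $j$}. Consequently $(y_i\backslash y_j)\backslash(y_i\backslash y_k)=W_i\backslash W_i=\varepsilon$, and all six expressions appearing in the cube condition collapse to $\varepsilon$. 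For Case~(B), writing $y_1,y_2\in P$ and $z$ outside, the no-edge hypothesis forces $zy_i=y_iz$ whenever the two lines intersect (since any multiple point on $z$ would be distinct from $P$); a straightforward reversing then shows $(y_1\backslash y_2)\backslash(y_1\backslash z)=W_1\backslash z=z=W_2\backslash z=(y_2\backslash y_1)\backslash(y_2\backslash z)$, with both sides jointly undefined in the parallel case; the remaining orientations reduce symmetrically. Case~(C) is immediate, since all pairwise relations are commutators, every complement $x_a\backslash x_b$ equals $x_b$, and the cube condition holds trivially.

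The main obstacle is Case~(A); once one notices the ``$j$-independence'' of the complement $y_i\backslash y_j$ — a feature peculiar to the cyclic form of relations at a multiple point — the computation collapses to the trivial identity $W\backslash W=\varepsilon$. Cases (B) and (C) are essentially bookkeeping, and the no-edge assumption is used precisely to rule out mixed configurations in which a single generator could appear as the ``outside'' member of two distinct cyclic relations. With the cube condition verified on all triples of generators, Dehornoy's criterion yields completeness of the presentation, and complementedness has already been established in the preceding lemma.
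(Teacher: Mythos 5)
Your proposal is correct and follows the same overall strategy as the paper: homogeneity plus complementedness reduce completeness to Dehornoy's cube condition on generators, which is then checked by a case analysis on the mutual position of the three corresponding lines. There are, however, two differences in execution worth recording. First, for the case of three lines through a common multiple point, the paper argues by explicit computation after reducing ``without loss of generality'' to multiplicity $4$ (consecutive lines) and multiplicity $6$ (non-consecutive lines), whereas your observation that $y_i\backslash y_j=W_i$ is independent of $j$ --- because every word in the cyclic relation at a multiple point begins with a distinct generator --- gives the identity $(y_i\backslash y_j)\backslash(y_i\backslash y_k)=W_i\backslash W_i=\varepsilon$ uniformly in the multiplicity and the ordering; this is cleaner and removes the need for the WLOG reductions. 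Second, and more substantively, the paper's explicit case list contains only the all-simple configuration and the all-through-one-multiple-point configuration; your Case~(B) --- two lines sharing a multiple point $P$ and a third line meeting each of them in a simple point --- is not treated there, yet it genuinely occurs under the no-edge hypothesis (e.g.\ a triple point plus one generic line), so your computation $W_1\backslash z=z=W_2\backslash z$ fills a case the paper leaves implicit. Your attention to the possibility of parallel lines (both sides of the cube condition being jointly undefined) is likewise more careful than the paper, which ignores parallelism. The only point you should make explicit is the convention $s^{-1}s\curvearrowright\varepsilon$ (the trivial relation), which is what justifies $W\backslash W=\varepsilon$; it is standard in Dehornoy's framework and is used implicitly by the paper as well.
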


\begin{proof}
It is obvious that the conjugation-free geometric presentations are homogeneous (since all the words in the same relation are of the same length).  Hence, we prove this proposition by verifying the equivalent version of the cube condition (for any triple $(u,u',u'') \in ({\mathcal S^*})^3$, the words $(u \backslash u') \backslash (u \backslash u'')$ and $(u' \backslash u) \backslash (u' \backslash u'')$ are $\mathcal R$-equivalent) case-by-case.

\medskip

\begin{enumerate}
\item[Case 1:] The three generators correspond to three lines $\ell_i,\ell_j,\ell_k$ intersecting in three simple points, see Figure \ref{case1}.

\begin{figure}[!ht]
\epsfysize 2cm
\centerline{\epsfbox{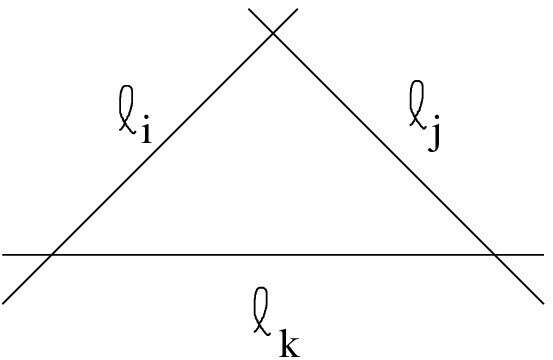}}
\caption{Case 1}\label{case1}
\end{figure}

In this case, the relations induced by the three simple points are: $[x_i,x_j]=[x_i,x_k]=[x_j,x_k]=e$, where $x_i,x_j,x_k$ are the generators of
the lines $\ell_i,\ell_j,\ell_k$ respectively.
So, we have:
$$(x_i \backslash x_j) \backslash (x_i \backslash x_k) = x_k = (x_j \backslash x_i) \backslash (x_j \backslash x_k),$$
which are indeed $\mathcal R$-equivalent.

By symmetry, this holds to any permutation of $x_i,x_j,x_k$ as needed.

\medskip

\item[Case 2:] The three generators correspond to three lines $\ell_i,\ell_j,\ell_k$ passing through the same multiple point, see Figure \ref{case2}. For this case, we have two subcases: in the first case, the corresponding lines appear consecutively in the intersection point. In the second case, the corresponding lines appear separately in the intersection point.

\begin{figure}[!ht]
\epsfysize 3cm
\centerline{\epsfbox{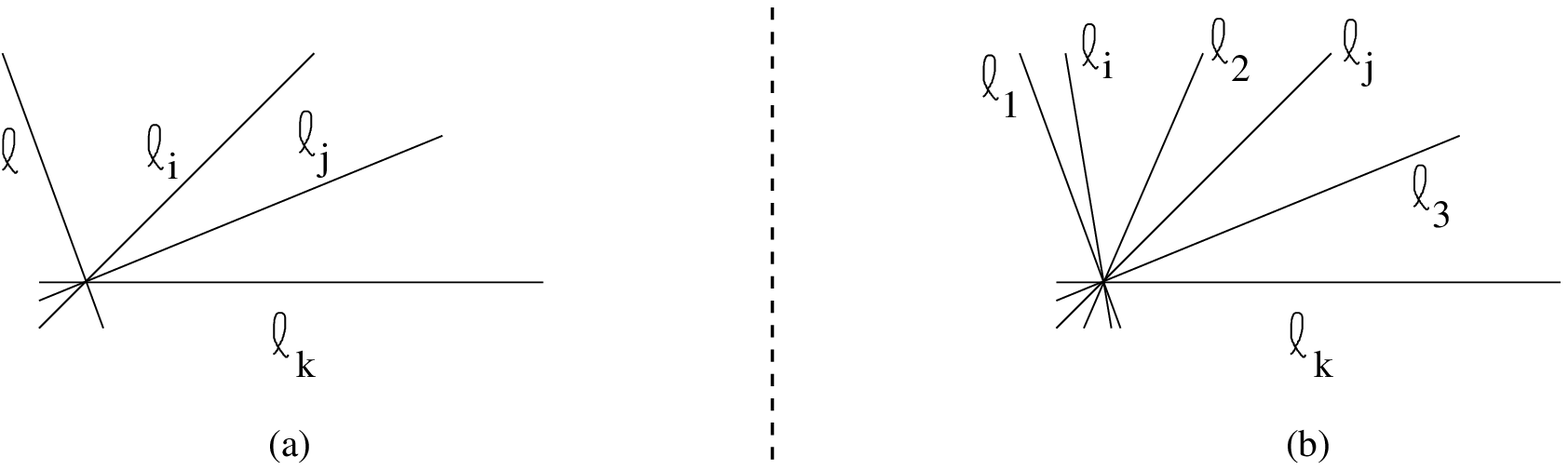}}
\caption{Case 2}\label{case2}
\end{figure}

\medskip

\begin{enumerate}
\item[Case 2a:] {\it The lines appear consecutively in the intersection point:} Without loss of generality, we can assume that the multiple point has multiplicity 4, see Figure \ref{case2}(a). Hence, the relations induced by this multiple point are:
    $$x x_k x_j x_i = x_k x_j x_i x = x_j x_i x x_k = x_i x x_k x_j,$$
    where $x_i,x_j,x_k,x$ are the generators of the lines $\ell_i,\ell_j,\ell_k,\ell$ respectively.
    Hence, we have:
    $$(x_i \backslash x_j) \backslash (x_i \backslash x_k) = e = (x_j \backslash x_i) \backslash (x_j \backslash x_k),$$
    which are indeed $\mathcal R$-equivalent.

    Any other permutation of $x_i,x_j,x_k$ yields $e$ in both sides of the condition, so the condition is satisfied for any permutation.

\medskip

\item[Case 2b:] {\it The lines do not appear consecutively in the intersection point:} Without loss of generality, we can assume that the multiple point has multiplicity 6, see Figure \ref{case2}(b). Hence, the relations induced by this multiple point are:
    \begin{eqnarray*}
    z x_k y x_j x x_i & = & x_k y x_j x x_i z = y x_j x x_i z x_k = x_j x x_i z x_k y =\\
    & =& x x_i z x_k y x_j = x_i z x_k y x_j x,
    \end{eqnarray*}
    where $x_i,x_j,x_k,x,y,z$ are the generators of the lines $\ell_i$,$\ell_j$, $\ell_k$,$\ell_1$,$\ell_2$,$\ell_3$ respectively.
    Hence, we have:
    $$(x_i \backslash x_j) \backslash (x_i \backslash x_k) = e = (x_j \backslash x_i) \backslash (x_j \backslash x_k),$$
    which are indeed $\mathcal R$-equivalent.

    Any other permutation of $x_i,x_j,x_k$ yields $e$ in both sides of the condition, so the condition is satisfied for any permutation.

\end{enumerate}

\end{enumerate}

\medskip

Hence, we have verified the equivalent version of the cube condition ($(u \backslash u') \backslash (u \backslash u'')$ and $(u' \backslash u) \backslash (u' \backslash u'')$ are $\mathcal R$-equivalent) for any triple of generators $u, u', u''$ in any case that the graph has no edges, so we are done.
\end{proof}

Hence, we have the following corollary:
\begin{cor}
Let $\cL$ be a real arrangement whose fundamental group has a conjugation-free geometric presentation and its graph $G(\cL)$ has no edges. Then, the corresponding monoid is cancellative and has least common multiples.
\end{cor}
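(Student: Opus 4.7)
The plan is to derive this corollary directly from the preceding Proposition together with the two cited results of Dehornoy quoted earlier in Section \ref{complete_back}. Under the hypothesis that $\cL$ has a conjugation-free geometric presentation and $G(\cL)$ has no edges, the Proposition just established produces a presentation of the associated monoid that is both complete and complemented. Thus the entire strategy is to invoke the general machinery of complemented/complete presentations on this concrete presentation.

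The concrete steps I would write are as follows. First, I would state that by the Proposition the monoid $\langle \mathcal S \mid \mathcal R\rangle^+$ associated to the conjugation-free geometric presentation of $\pi_1(\C^2 - \cL)$ is presented by a complete complemented presentation. Second, I would apply Proposition 6.1 of \cite{Deh2} (quoted above), which asserts that every monoid admitting a complete complemented presentation is left-cancellative, to conclude the cancellativity statement. (Here I would also remark, if needed for the precise wording ``cancellative'', that homogeneity of the presentation together with left-cancellativity of the monoid gives the corresponding right-cancellativity as well, since a length function is preserved by the relations.) Third, I would apply Proposition 6.10 of \cite{Deh2} to conclude that the monoid admits least common multiples.

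Since the substantive work, namely verifying the cube condition case-by-case for triples of generators in the absence of edges, has already been completed in the proof of the Proposition, no further combinatorial or topological input is required. The proof is essentially a one-line citation chain: conjugation-free geometric presentation $+$ no edges in $G(\cL)$ $\Rightarrow$ complete and complemented $\Rightarrow$ (left-)cancellative and admits least common multiples.

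The only potential obstacle I anticipate is a bookkeeping one, namely confirming that the notion of ``cancellative'' in the statement matches the ``left-cancellative'' conclusion of \cite[Proposition 6.1]{Deh2}; if the authors intend two-sided cancellativity, this would be addressed by the homogeneity (length-preservation) remark above, which gives right-cancellativity symmetrically. Otherwise the corollary is a formal consequence with no further content beyond quoting the Proposition and the two Dehornoy results.
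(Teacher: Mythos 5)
Your proposal takes exactly the paper's route: the paper gives this corollary with no argument beyond ``Hence,'' and the intended proof is precisely your citation chain --- the preceding Proposition yields a complete complemented presentation, and then \cite[Prop.~6.1]{Deh2} and \cite[Prop.~6.10]{Deh2} give left-cancellativity and least common multiples.

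One caveat: your parenthetical claim that homogeneity together with left-cancellativity yields right-cancellativity is not valid as a general principle. A homogeneous, left-complemented presentation can perfectly well present a monoid that is left- but not right-cancellative (the existence of a length function invariant under the relations says nothing about cancelling on the right). To get two-sided cancellativity one would need a separate argument --- for instance, observing that reversing all words sends the cyclic conjugation-free relations to relations of the same form (with the index order reversed) and re-running the cube-condition verification for that reversed presentation, or invoking a right-handed analogue of Dehornoy's criterion. This is a gap the paper itself shares, since the corollary asserts ``cancellative'' while the only tool quoted gives left-cancellativity; your instinct to flag the mismatch was right, but the proposed one-line fix does not work.
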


\begin{rem}
The condition that the graph has no edges is essential, since if we take a line arrangement whose graph contains an edge, and its fundamental group has a conjugation-free geometric presentation, we can find a triple of generators for which the cube condition is not satisfied anymore.
\end{rem}

\section*{Acknowledgments}
We would like to thank an anonymous referee of our previous paper \cite{EGT} for pointing us out the possible connection between the conjugation-free geometric presentations and Dehornoy's complete presentations.
We owe special thanks to Patrick Dehornoy for many discussions.

We wish to thank Tadashi Ishibe for pointing us a wrong consequence from the published version of this paper, and consequently it weakens our result concerning complete presentations.

\end{document}